\theoremstyle{plain}
\newtheorem{theorem}{Theorem}[section]
\newtheorem*{remark3.2}{Remark 3.2}
\newtheorem*{remark3.3}{Remark 3.3}
\numberwithin{equation}{section}
\def\nd{\noindent}
\def\oH{\buildrel\circ\over H}
\def\oH1{\buildrel\circ\over H\kern-.02in{}^1}
\def\d{\delta}
\begin{document}

\title{                  
On a new notion of the solution to an ill-posed problem
  \thanks{key words: ill-posed problems, regularizer,
stable solution of ill-posed problems
    }
   \thanks{AMS2010 subject classification: 47A52, 65F22, 65J20 }
}

\author{
A.G. Ramm\\
Mathematics Department, Kansas State University, \\
 Manhattan, KS 66506-2602, USA\\
ramm@math.ksu.edu\\
}

\date{}

\maketitle\thispagestyle{empty}

\begin{abstract}

A new understanding of the notion of the stable solution to 
ill-posed problems is proposed. 
The new notion is more realistic than the old one
and better fits the practical computational needs. 
A method for
constructing stable solutions in the new sense is proposed and justified.
The basic point is: in the traditional definition of the stable solution 
to an ill-posed problem $Au=f$, where $A$ is a linear or nonlinear 
operator in a Hilbert space $H$,   it is 
assumed that the noisy data 
$\{f_\delta, \delta\}$ are given, $||f-f_\delta||\leq \delta$, and a 
stable solution $u_\d:=R_\d f_\d$ is defined by the
relation $\lim_{\d\to 0}||R_\d f_\d-y||=0$, where $y$ solves the 
equation
$Au=f$, i.e., $Ay=f$. In this definition $y$ and $f$ are unknown. 
Any $f\in B(f_\d,\d)$ can be the exact data, where $B(f_\d,\d):=\{f: 
||f-f_\delta||\leq \delta\}$.

The new notion of the stable  solution excludes the unknown
$y$ and $f$ from the definition of the solution.

\end{abstract}


\section{Introduction}
Let
\begin{equation} \label{1.1} Au=f, \end{equation}
where $A:H\to H$ is a linear closed operator, densely defined in a Hilbert 
space $H$.
Problem \eqref{1.1} is called ill-posed if $A$ is not a homeomorphism
of $H$ onto $H$, that is, either equation \eqref{1.1} does not have a 
solution,
or the solution is non-unique, or the solution does not depend on $f$
continuously. Let us assume that \eqref{1.1} has a solution, possibly 
non-unique. Let $N(A)$ be the null space of $A$, and $y$ be the unique 
normal solution to \eqref{1.1}, i.e., $y\perp N(A)$.
Given noisy data $f_\delta$, $\|f_\delta-f\|\leq \delta$,
one wants to construct a stable approximation $u_{\delta}:=R_\d f_\d$ of 
the solution
$y$, $\|u_\delta-y\|\to 0$ as $\delta \to 0$.

Traditionally (see, e.g., \cite{M}) one calls a family of
operators $R_h$ a regularizer for problem \eqref{1.1} (with not 
necessarily linear operator $A$) if

\nd a) $R_hA(u)\to u$ as $ h\to 0$ for any $u\in D(A)$,\\
\nd b) $R_hf_\delta$ is defined for any $f_\delta\in H$
and there exists $h=h(\delta)\to 0$ as $\delta\to 0$ such that
\begin{equation} \label{ast} \|R_{h(\delta)} f_\delta-y\| \to 0 
\tag{{$\ast$}} \hbox { as } \delta\to 0. \end{equation}

In this definition $y$ is fixed and \eqref{ast} must hold for any
$f_\delta\in B(f,\delta):=\left\{ f_\delta:\|f_\delta-f\|\leq \delta 
\right\}$.

In practice one does not know the solution $y$ and the exact data $f$.
The only available information is a family $f_\delta$ and some {\it a
priori information about $f$ or about the solution $y$}.
This a priori information often
consists of the knowledge that $y\in {\mathcal K}$, 
where ${\mathcal K}$ is a compactum in $H$.
Thus
$$y\in S_\delta
  :=\left\{ v:\|A(v)-f_\delta\|\leq\delta,\ v\in {\mathcal K}\right\}.$$
We assume that the operator $A$ is known exactly, and we always
assume that $f_\delta\in B(f,\delta)$, where $f=A(y)$.

{\bf Definition:} {\it We call a family of operators $R(\delta)$ a 
regularizer if}
\begin{equation} \label{1.2}
 \sup_{v\in S_\delta} \|R(\delta) f_\delta-v\|
 \leq \eta(\delta)\to 0 \hbox{\quad as\quad } \delta \to 0. \end{equation}
There is a {\it crucial difference} between our new Definition 
\eqref{1.2}
and the standard definition \eqref{ast}:

{\it In \eqref{ast} $u$ is fixed,
while in \eqref{1.2} $v$ is an arbitrary element of $S_\delta$ and the 
supremum of the norm in \eqref{1.2} over all such $v$ 
must tend to zero as $\delta\to 0$.}

The new definition is more realistic and better fits computational 
needs
because not only the solution $y$ to \eqref{1.1} satisfies the inequality
$\|Ay-f_\delta\|\leq \delta$,
but any $v\in S_\d$ satisfies this inequality
$\|Av-f_\delta\|\leq \delta$, $v\in {\mathcal K}$.
The data $f_\delta$ may correspond
to any $f=Av$, where  $v\in S_\delta$, and not only to $f=Ay$, where $y$ 
is a solution of equation (1.1).
Therefore it is more natural  to use definition \eqref{1.2} than \eqref{ast}.

Our goal is to illustrate the practical difference between these two
definitions, and to construct regularizer in the sense
\eqref{1.2} for problem \eqref{1.1} with an arbitrary, not necessarily 
bounded, linear operator $A$, which is closed and densely defined in 
$H$. This is done in Section 2. In Section 1 this 
is done for a class of equations \eqref{1.1} with
nonlinear operators $A:X\to Y$, where $X$ and $Y$ are Banach spaces. 
In this case we assume that

\nd A1) $A:X\to Y$ {\it is a closed, nonlinear, injective map},
$f\in {\mathcal R}(A)$, $ {\mathcal R}(A)$ {it is the range of} $A$,

and

\nd A2) $\phi: D(\phi)\to [0,\infty)$, $\phi(u)>0$ {\it if} $u\not= 0$,
 $D(\phi)\subseteq D(A)$,
{\it the sets} ${\mathcal K}={\mathcal K}_c:=\{v:\phi(v)\leq c\}$
{\it are compact in} $X$ {\it for every} $c=const>0$, {\it and if}
$v_n\to v$, {\it then}
$\phi(v)\leq \liminf_{n\to\infty} \phi(v_n)$.

The last inequality holds if $\phi$ is {\it lower semicontinuous}.
In Hilbert spaces and  in reflexive Banach spaces norms are
lower semicontinuous.

Let us give some examples of equations for which assumptions A1) and A2) 
are satisfied.

\nd{\bf Example 1.}
$A$ is a linear injective compact operator, $f\in{\mathcal R}(A)$,
$\phi(v)$ is a norm on $X_1\subset X$, where $X_1$ is densely
imbedded in $X$, the embedding $i:X_1\to X$ is compact, and
$\phi(v)$ is lower semicontinuous.

\nd{\bf Example 2.}
$A$ is a nonlinear injective continuous operator $f\in{\mathcal R}(A)$,
$A^{-1}$ is not continuous, $\phi$ is as in Example 1.

\nd{\bf Example 3.}
$A$ is linear, injective, densely defined, closed operator,
$f\in{\mathcal R}(A)$, $A^{-1}$ is unbounded, $\phi$ is as in Example 1,
$X_1\subseteq D(A)$.

Let us demonstrate by Example A that {\it a regularizer in
the sense \eqref{ast} may be not a regularizer in the sense \eqref{1.2}}.

In Example B a theoretical construction of a regularizer in the sense
\eqref{1.2} is given for some equations \eqref{1.1} with nonlinear 
operators.

In Section 2 a novel theoretical construction of a regularizer in the 
sense \eqref{1.2} is given for a very wide class of equations \eqref{1.1} 
with linear operators $A$.

{\bf Example A}: {\it Stable numerical differentiation.}

In this Example the results from \cite{R32} - \cite{R428} are used.
This Example is borrowed from \cite{R444}.

Consider stable numerical differentiation of noisy data.
The problem is:
\begin{equation} \label{1.3}
Au:=\int^x_0 u(s)\,ds=f(x), \quad f(0)=0, \quad 0\leq x\leq 1.
\end{equation}
The data are: $f_\delta$ and a constant $M_a$, which defines a 
compact ${\mathcal K}$, where 
$\|f_\delta-f\|\leq \delta$,
the norm is $L^\infty(0,1)$ norm, and ${\mathcal K}$ consists of the 
$L^\infty$ functions which satisfy the inequality
$\|u\|_a\leq M_a$, $a\geq 0$.
The norm
$$\|u\|_a := \sup_{\substack {x,y\in[0,1] \\ x\neq y}}
  \ \frac{|u(x)-u(y)|}{|x-y|^a}
 + \sup_{0\leq x\leq 1} |u(x)|\quad \hbox{\ if\ }  0\leq a\leq 1,
$$
$$\|u\|_a:=\sup_{0\leq x\leq 1} (|u(x)| + |u'(x)|)
 +\sup_{\substack {x,y\in[0,1] \\ x\neq y}}
  \ \frac{|u'(x)-u'(y)|}{|x-y|^{a-1}},
 \quad 1<a\leq 2.
$$
If $a>1$, then we define
\begin{equation} \label{1.4}
R(\delta)f_\delta:=
\begin{cases}
\frac{f_\delta(x+h(\delta))-f_\delta(x-h(\delta))}{2h(\delta)},
   & h(\delta)\leq x\leq 1-h(\delta),\\
\frac{f_\delta(x+h(\delta))-f_\delta(x)}{h(\delta)},
   & 0\leq x< h(\delta), \\
\frac{f_\delta(x)-f_\delta(x-h(\delta))}{h(\delta)},
   & 1-h(\delta)<x\leq 1,
\end{cases} \end{equation}
where
\begin{equation}
\label{1.5} h(\delta)=c_a\delta^{\frac{1}{a}},
\end{equation}
and $c_a$ is a constant given explicitly (cf \cite{R133}). 

We prove that
\eqref{1.4} is a regularizer for \eqref{1.3} in the sense \eqref{1.2},
and ${\mathcal K}:=\{v:\|v\|_a \leq M_a,\ a>1\}$.
In this example we do not use lower semicontinuity of the norm $\phi(v)$
and do not define $\phi$. 

Let $S_{\delta,a}:=\{ v:\|Av-f_\delta\| \leq \delta, \ \|v\|_a \leq M_a 
\}$. 
To prove that \eqref{1.4}-\eqref{1.5} is a regularizer in the sense
\eqref{1.2} we use the estimate
\begin{equation} \label{1.6}
\begin{aligned}
\sup_{\substack {v\in S_{\delta,a}}}  &  \|R(\delta) f_\delta-v\|
\leq \sup_{\substack {v\in S_{\delta,a}}}
  \{ \|R(\delta)(f_\delta-Av)\| +\| R(\delta)Av-v\| \}
\leq \frac{\delta}{h(\delta)} +M_a h^{a-1}(\delta) \leq
 \\&
\leq c_a \delta^{1-\frac{1}{a}} :=\eta(\delta)\to 0
  \hbox{\ as\ } \delta \to 0. 
\end{aligned}\end{equation}
Thus we have proved that \eqref{1.4}-\eqref{1.5} is a regularizer
in the sense \eqref{1.2}.

If $a=1$, and $M_1<\infty$, then one can prove the following result:

 {\bf Claim:}   {\it There is no
regularizer for problem \eqref{1.3} in the sense \eqref{1.2}
even if the regularizer is sought in the set of all operators,
including nonlinear ones}. 

More precisely, it is proved in \cite{R190}, 
p.345, (see also \cite{R499}, pp 197-235, where the stable numerical 
differentiation problem is discussed in detail) that
$$\inf_{R(\delta)} \ \sup_{v\in S_{\delta,1}}
 \ \|R(\delta) f_\delta - v\| \geq c>0,$$
where $c>0$ is a constant independent of $\delta$
and the infimum is taken {\it over all operators
$R(\delta)$ acting from $L^{\infty}(0,1)$
into  $L^{\infty}(0,1)$, including nonlinear ones}.

On the other hand, if $a=1$ and $M_1<\infty$, then
a regularizer in the sense \eqref{ast} does exist,
but the rate of convergence in (*) may be as slow
as one wishes, if
$u(x)$ is chosen suitably (see \cite{R133}, \cite{R499}).

{\bf Example B:} {\it Construction of a regularizer in the sense 
\eqref{1.2} for some nonlinear equations.}

Assuming A1) and A2), let us 
construct a regularizer
for \eqref{1.1} in the sense \eqref{1.2}. We use the ideas from
\cite{R444} and \cite{R428}.

Define $F_\delta(v):=\|Av-f_\delta\| + \delta\phi(v)$ and
consider the minimization problem of finding the infimum $m(\delta)$ of 
the functional $ F_\delta(v)$ on a set $ S_\delta$:
\begin{equation} \label{1.7}
m(\delta):=\inf_{v\in S_\d} F_\delta(v),
 \qquad S_\delta:=\{v:\|Av-f_\delta\| \leq \delta,
 \ \phi(v)\leq c\}.
\end{equation}
Here 
$${\mathcal K}={\mathcal K}_c:=\{ v:\phi(v)\leq c\}.$$
The constant $c>0$ can be chosen arbitrary large and fixed at
the beginning of the argument, and then one can choose a smaller
constant $c_1$, specified below.
Since $F_\delta(u)=\delta + \delta\phi(u):=c_1\delta$,
$c_1:=1+\phi(u)$, where $u$ solves \eqref{1.1}, one concludes that
\begin{equation} \label{1.8}
 m(\delta)\leq c_1\delta.
\end{equation}
Let $v_j$ be a minimizing sequence and
$F_\delta(v_j)\leq 2m(\delta)$.
Then $\phi(v_j)\leq 2 c_1$. By assumption A2),
as $j\to\infty$, one has:
\begin{equation}  \label{1.9}
 v_j\to v_\delta,
 \qquad \phi(v_\delta) \leq 2c_1.
\end{equation}
Take $\delta=\delta_m\to 0$ and denote
$v_{\delta_m}:=w_m$.
Then \eqref{1.9} and Assumption A2) imply the existence
of a subsequence, denoted again $w_m$, such that:
\begin{equation} \label{1.10}
 w_m\to w,\qquad A(w_m)\to A(w), \qquad \|A(w)-g\|=0.
\end{equation}
Thus $A(w)=g$ and, since $A$ is injective by Assumption A1), 
it follows that $w=u$, where 
$u$ is the
unique solution to \eqref{1.1}.

Define now $R(\delta)f_\delta$ by the formula 
$R(\delta)f_\delta:=v_\delta$, where $v_\delta$ is defined in 
\eqref{1.9}.

\begin{theorem}
$R(\delta)$ is a regularizer for problem \eqref{1.1} in the
sense \eqref{1.2}.
\end{theorem}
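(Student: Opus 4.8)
The plan is to split the uniform estimate \eqref{1.2} into two convergences and join them by the triangle inequality. Write $v_\delta:=R(\delta)f_\delta$ and let $u$ be the unique solution of \eqref{1.1}. First, the construction \eqref{1.9}--\eqref{1.10} already yields $v_\delta\to u$ as $\delta\to0$: the bound $\phi(v_\delta)\leq 2c_1$ confines the family $\{v_\delta\}$ to the fixed compactum ${\mathcal K}_{2c_1}$ of Assumption~A2), while closedness and injectivity of $A$ force every limit point of this family to equal $u$, so the whole family converges. Granting, second, the \emph{uniform} statement $\sup_{v\in S_\delta}\|v-u\|\to0$, one immediately obtains
$$\sup_{v\in S_\delta}\|R(\delta)f_\delta-v\|\leq \|v_\delta-u\|+\sup_{v\in S_\delta}\|v-u\|=:\eta(\delta)\to0,$$
which is exactly \eqref{1.2}.

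The heart of the matter is therefore the uniform convergence $\sup_{v\in S_\delta}\|v-u\|\to0$, and I would prove it by contradiction. Suppose it fails: there exist $\varepsilon>0$, a sequence $\delta_n\to0$, and points $v_n\in S_{\delta_n}$ with $\|v_n-u\|\geq\varepsilon$. Since $\phi(v_n)\leq c$, all $v_n$ lie in the compactum ${\mathcal K}_c$, so a subsequence converges, $v_{n_k}\to\bar v\in{\mathcal K}_c$. From $\|Av_{n_k}-f_{\delta_{n_k}}\|\leq\delta_{n_k}$ and $\|f_{\delta_{n_k}}-f\|\leq\delta_{n_k}$ the triangle inequality gives $Av_{n_k}\to f$. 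Closedness of $A$ (Assumption~A1)) then places the pair $(\bar v,f)$ in the graph of $A$, so $\bar v\in D(A)$ and $A\bar v=f$; injectivity of $A$ yields $\bar v=u$, contradicting $\|v_{n_k}-u\|\geq\varepsilon$. This establishes the uniform convergence and, together with the first step, proves the theorem.

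The step I expect to be the main obstacle is the passage to the limit $A\bar v=f$. Because $A$ is only assumed closed, and need be neither bounded nor continuous, one cannot conclude $Av_{n_k}\to A\bar v$ directly; the argument must instead use the closedness of the graph, combining the compactness-supplied convergence $v_{n_k}\to\bar v$ with the separately derived convergence $Av_{n_k}\to f$. Thus compactness of ${\mathcal K}_c$ is the hypothesis doing the real work, supplying the convergent subsequence; note that lower semicontinuity of $\phi$ is \emph{not} needed for the uniform estimate itself and enters only in the construction of $v_\delta$ via \eqref{1.9}. The one routine point to check is that $\|f_{\delta_{n_k}}-f\|\leq\delta_{n_k}$ forces $Av_{n_k}\to f$ in the norm of $Y$, which makes the closedness argument airtight.
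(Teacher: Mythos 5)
Your proof is correct and uses essentially the same mechanism as the paper's: extract convergent subsequences by compactness of ${\mathcal K}_c$, pass to the limit in $\|Av-f_\delta\|\leq\delta$ via closedness of $A$, and invoke injectivity to identify every limit point with the unique solution $u$. The only cosmetic difference is that you route the estimate through $u$ by the triangle inequality, whereas the paper contradicts the supremum bound directly by producing two sequences $w_m\to\tilde w$ and $v_m\to\tilde v$ with $\|\tilde w-\tilde v\|\geq\gamma/2$ and then showing $\tilde w=\tilde v=u$.
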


\begin{proof}
Assume the contrary:
\begin{equation}\label{1.11}
  \sup_{v\in S_\delta} \|R(\delta)f_\delta - v\|
  = \sup_{v\in S_\delta} \|v_\delta-v\| \geq \gamma >0,
\end{equation}
where $\gamma>0$ is a constant independent of $\delta$.
Since $\phi(v_\delta)\leq 2c_1$ by \eqref{1.9}, and
$\phi(v)\leq c$, one can choose convergent in $X$ sequences
$w_m:=v_{\delta_m}\to \tilde w$, $\delta_m\to 0$, and $v_m\to\tilde 
v$,
such that $\|w_m-v_m\|\geq \frac{\gamma}{2}$,
$\| \tilde w -\tilde v\|\geq\frac{\gamma}{2}$,
and $A(\tilde w)=g$, $A(\tilde v)=g$.
By the injectivity of $A$ it follows that $\tilde w=\tilde 
v=u$. This contradicts
the inequality $\|\tilde w-\tilde v\|\geq\frac{\gamma}{2}>0$.
This contradiction proves the theorem.    

The conclusions $A(\tilde w)=g$ and $A(\tilde v)=g$, that we have used 
above, follow
from the inequalities $\|A(v_\delta)-f_\delta\|\leq \delta$
and $\|A(v)-f_\delta\| \leq \delta$
after passing to the limit $\delta\to 0$, using assumption A2).
\end{proof}

\section { Construction of a regularizer in the sense
\eqref{1.2} for linear equations}

If $A$ is a linear  closed densely defined in $H$ operator, 
then $T=A^*A$ is a densely defined selfadjoint operator. Let $T_a:=T+aI$,
where  $a=const>0$. The operator $T_a^{-1}A^*$ is densely defined and 
closable. Its closure is a bounded operator, defined 
on all of $H$, and $||T_a^{-1}A^*||\leq \frac 1 
{2\sqrt{a}}.$  See \cite{R500}-\cite{R522} for details and other results.
Let $E_s$ be the resolution of the identity of the selfadjoint operator 
$T$, $d\rho:=d(E_sy,y)$, and 
${\mathcal K}:=\{u: \int_0^\infty s^{-2p}d\rho\leq k_p^2\}$, where
$p\in (0,1)$ and $k_p>0$ are constants. 

Our basic result is:

{\bf Theorem 2.1.} {\it The operator $R_\d=T_{a(\d)}^{-1}A^*$   
is a regularizer for problem \eqref{1.1} in the sense \eqref{1.2}
if $\lim_{\d\to 0} \frac {\d}{a(\d)^{1/2}}=0$ and $\lim_{\d\to 
0}a(\d)=0$. Moreover, if $a(\d)=b_p\d^{\frac 2{2p+1}},$ then
\begin{equation}\label{2.1}
  \sup_{y\in {\mathcal K}, ||Ay-f_\d||\leq \d} \|R(\delta)f_\delta - y\|
  \leq C_p\d^{\frac {2p}{2p+1}},
\end{equation}
where 
$$C_p=\frac 1{2\sqrt{b_p}}+c_pk_pb_p^p, \qquad c_p=p^p(1-p)^{1-p},
\qquad b_p:=(4pc_pk_p)^{-\frac {2}{2p+1}}.$$ 
The above choice of $a(\d)$ is optimal 
in the sense that the right-hand side of \eqref{2.2} (see below) is 
minimal for this choice of  $a(\d)$.
}

{\it Proof.} 

Let
$$\epsilon:=\sup_{y\in {\mathcal K}, ||Ay-f_\d||\leq 
\d}||T_a^{-1}A^*f_\d-y||:=\sup ||T_a^{-1}A^*f_\d-y||.$$
 Then, with $Ay=f$, one has 
$$\epsilon\leq \sup ||T_a^{-1}A^*(f_\d-f)||+\sup 
||T_a^{-1}A^*Ay-y||:=J_1+J_2,
$$
where
$$J_1\leq \frac {\d}{2\sqrt{a}},$$
and
$$J_2^2\leq \sup \{a^2||T_a^{-1}y||^2\}\leq \sup \int_0^\infty \frac 
{a^2}{(s+a)^2} d(E_sy,y).$$ 

Thus,
$$J_2^2\leq \big(\max_{s\geq 0}\frac 
{as^p}{a+s}\big)^2k_p^2=c_p^2k_p^2a^{2p},$$
because  $\max_{s\geq 0}\frac{as^p}{a+s}$ is attained at 
$s=\frac{pa}{1-p}$ and is equal to $c_pa^p$, where
$$c_p:=p^p(1-p)^{1-p},\qquad k_p^2:=\sup_{y\in {\mathcal K}}\int_0^\infty 
s^{-2p}d(E_sy,y).
$$
Consequently,
$$J_2\leq c_pk_pa^{p},$$
and
\begin{equation}\label{2.2}
\epsilon\leq \frac {\d}{2\sqrt{a}}+c_pk_pa^{p}.
\end{equation}
Minimizing the right-hand side of \eqref{2.2} with respect to $a> 0$, 
one obtains inequality \eqref{2.1}. 

The minimizer of the right-hand side of \eqref{2.2} is 
$$a=a(\d)=b_p\d^{\frac {2}{2p+1}}, \qquad 
b_p:=(4pc_pk_p)^{-\frac {2}{2p+1}},$$
and the minimum of the right-hand side of \eqref{2.2} is
$C_p\d^{\frac {2p}{2p+1}}$, where 
\begin{equation}\label{2.3}
C_p:=\frac 1{2\sqrt{b_p}}+c_pk_pb_p^p.
\end{equation}
Theorem 2.1 is proved. \hfill $\Box$


\begin{thebibliography}{1000} 


\bibitem{DS}
Dunford, N., Schwartz, J., {\bf Linear Operators}, Interscience, New 
York, 1958.

\bibitem{M}
V.~Morozov, {\it Methods of solving incorrectly posed problems}, 
Springer
Verlag, New York, 1984.



\bibitem{R32} 
Ramm, A.G.,
On numerical differentiation, Mathematics,
Izvestija vuzov, 11, (1968), 131-135. (In Russian).

\bibitem{R133} 
Ramm, A.G.,
Stable solutions of some ill-posed
problems, Math. Meth. in the
 appl. Sci. 3, (1981), 336-363.

\bibitem{R190}  
Ramm, A.G.,
{\bf Scattering by obstacles}, D.Reidel,
Dordrecht, 1986, pp.1-442.

\bibitem{R246}  
Ramm, A.G.,
{\bf Random fields estimation theory}, Longman
Scientific and Wiley, New York, 1990.

\bibitem{R397}  
Ramm, A.G.,
Inequalities for the derivatives, Math. Ineq. and
Appl., 3, N1, (2000), 129-132.


\bibitem{R499}  Ramm, A.G.,
{\bf Dynamical systems method for solving
operator equations}, Elsevier, Amsterdam, 2007.


\bibitem{R542}  Ramm, A.G.,
   Dynamical systems method for solving
linear ill-posed problems,  Ann. Polon. Math., 95, N3, (2009),
253-272.


\bibitem{R444}  Ramm, A.G.,
On a new notion of regularizer, J.Phys A, 36 (2003),
2191-2195.


\bibitem{R428} 
Ramm, A.G.,
Regularization of ill-posed problems with
unbounded operators, J. Math. Anal. Appl., 271, (2002), 447-450.

\bibitem{R500}
Ramm, A.G., Ill-posed problems with unbounded operators,
J. Math. Anal. Appl., 325, (2007), 490-495.

\bibitem{R501}
Ramm, A.G., Dynamical systems method (DSM) for selfadjoint
operators, J. Math. Anal. Appl., 328, (2007), 1290-1296.

\bibitem{R504}
Ramm, A.G., Iterative solution of linear equations with
unbounded operators, J. Math. Anal. Appl., 330, N2, (2007), 1338-1346.

\bibitem{R522}
Ramm, A.G., On unbounded operators and applications,
Appl. Math. Lett., 21, (2008), 377-382.



\end{thebibliography}
\end{document}